\theoremstyle{plain}
\newtheorem{thm}{Theorem}[section]
\theoremstyle{remark}
\newtheorem{rem}{Remark}[section]
\newtheorem{ex}{Example}[section]
\newtheorem{prob}{Problem}[section]
\numberwithin{equation}{section}
\begin{document}

\title{On a Fabric of Kissing Circles}

\author[V. \v{C}er\v{n}anov\'a]{Viera \v{C}er\v{n}anov\'a}
\address{Department of Mathematics and Computer Science, Faculty of Education, Trnava University, Priemyseln\'a 4, P. O. BOX 9, 918 43 Trnava, Slovakia}
\email{\href{mailto: V. Cernanova <vieracernanova@hotmail.com>}{vieracernanova@hotmail.com}, \href{mailto: V. Cernanova <viera.cernanova@truni.sk>}{viera.cernanova@truni.sk}}

\begin{abstract}
Applying circle inversion on a square grid filled with circles, we obtain a~configuration that we call a~fabric of kissing circles. In the paper, we focus on the curvature inside the individual components of the fabric, which are two orthogonal frames and two orthogonal families of chains. We show that the curvatures of the frame circles form a~doubly infinite arithmetic sequence (bi--sequence), whereas the curvatures in each chain are arranged in a~quadratic bi--sequence. We also prove a sufficient condition for the fabric to be integral.
\end{abstract}

\keywords{curvature, fabric of kissing circles, frame, inversion in circle, PL chain}

\subjclass[2010]{Primary 52C26; Secondary 11B37}


\maketitle

\section{Introduction}
\label{sec:intro}
Classical geometric configurations of circles, such as the arbelos or the Pappus chain, and their numerous fascinating properties, have been known for about two thousand years.
At the beginning of the 19th century, the discovery of a~transformation called \textit{inversion in circle} (briefly \textit{inversion}) significantly simplified the solution of classical geometric problems. The last few decades have seen an expansion in the study of geometric configurations, namely circle packing problems, including the \textit{Apollonian packing}. The latter is a~fractal that begins with three externally tangent circles inscribed in one common circle, and is constructed by repeatedly inscribing circles into the triangular interstices in the configuration. Integral Apollonian  packings, that is, those in which the curvature of each circle is an integer, are of particular interest. 
The reader will find an engaging overview of the Apollonian  packing in \cite{Stephenson03}.  

In this paper we present a~novel configuration that we call a~\textit{fabric (of kissing circles)}, where the circles are organized in frames or in chains. 
Like the arbelos, the Pappus chain or the Apollonian packing, the fabric is closely related to the problem of Apollonius: given three generalized mutually touching circles with three points of tangency, there exist two other generalized circles touching the three.

The paper is organized as follows. In Section \ref{sec:fabric} the fabric is obtained by inverting a square grid filled with circles. Section \ref{sec:curvatures} brings our main results concerning the curvatures in the fabric, and a~sufficient condition for the fabric to be integral. To illustrate the power of the results from Section \ref{sec:curvatures}, we use them to solve two old sangaku problems in the last Section \ref{sec:sangaku}. 

We first introduced the fabric in \cite{cernan2020}, but our approach in Section \ref{sec:fabric} is not similar, and the relations between curvatures that we present in Section \ref{sec:curvatures} have not been published.
\section{Structure of the Fabric}
\label{sec:fabric}
Consider an infinite grid with evenly spaced horizontal and vertical lines, and a~circle inscribed in each square cell. Let  $A$ be an arbitrary point in the plane of the grid, let $\mathcal{C}$ be a~circle centered at $A.$ The inversion with respect to $\mathcal{C}$ transforms every element of this infinite  grid filled with circles into a~generalized circle. Throughout this paper, we call each of them a~{\it circle}, even though it may be a~straight line.
\subsection{Frames}
\label{sec:frames}
With respect to $\mathcal{C},$ the grid lines are inverted to two infinite families of circles passing through $A$: vertical lines into circles centered on a~horizontal line $p,$ and horizontal lines into circles centered on a~vertical line $p',$ both lines passing through $A.$ 
We denote the two families $\mathcal{V}$ and $\mathcal{H},$ respectively. 
Inversion preserves angles, thus $v \perp h$ for each pair of circles $(v,h)\in (\mathcal{V},\mathcal{H}).$ We will call $\mathcal{V}$ a~{\it vertical frame}, and $\mathcal{H}$ a~{\it horizontal frame}. Because the frames $\mathcal{V}$ and $\mathcal{H}$ are subsets of two orthogonal parabolic pencils with the common carrier $A,$ we will say that the vertical and horizontal frames are \textit{orthogonal}. 
\begin{figure}[bht]
	\centering
	\begin{minipage}{.5\textwidth}
		\centering
		\includegraphics[width=1.4in]{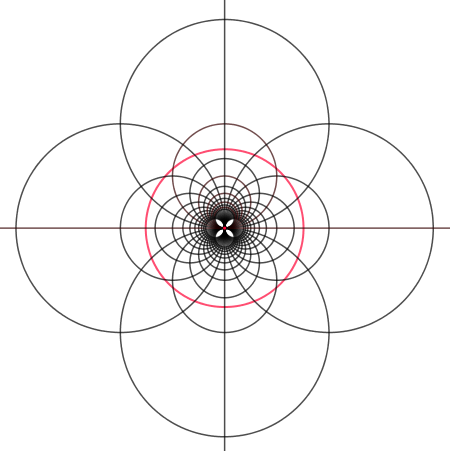}
		\caption{Orthogonal frames}{}
		\label{fig:frames2}
	\end{minipage}%
	\begin{minipage}{.5\textwidth}
		\centering
		\includegraphics[width=1.5in]{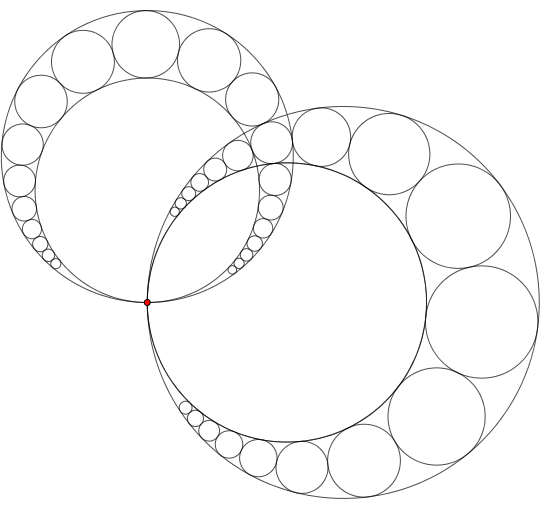}
		\caption{Orthogonal chains}{}
		\label{fig:orthochains4}
	\end{minipage}
\end{figure}
Figure~\ref{fig:frames2} represents a~pair of orthogonal frames, together with the reference circle and the carrier, which are red. The vertical line is a~generalized circle belonging to the vertical frame, similarly the horizontal line belongs to the horizontal frame.
\subsection{Chains}
Because a circle is inscribed in each cell of the grid, a~chain of tangent congruent circles is inscribed in each strip between two adjacent lines of the grid. The inversion with respect to $\mathcal{C}$ turns this chain into a~chain of circles,  each of which is tangent to its two neighbors in the chain and to two bounding circles. If the central line $p$ passes through the center of a~circle in the chain, a~part of the inverted chain is the Pappus chain.
This drives us to call the chain of circles obtained as above a~\textit{Pappus--like chain, PL--chain} or simply a~{\it chain}. Inverting a~column of circles gives a~{\it vertical chain}, inverting a~row of circles gives a~{\it horizontal chain}. 

Each pair of PL--chains, one of which is vertical and the other horizontal, shares a~circle. It is so because each row  of circles inscribed  in the grid shares a~circle with each column, and vice versa. In addition, the bounding circles of the two families of chains are orthogonal. We say that the chains are \textit{orthogonal} (Figure \ref{fig:orthochains4}).
\subsection{The Fabric}
We call a~{\it fabric of kissing circles} ({\it fabric}) the union of two orthogonal frames and two orthogonal families of inscribed chains , each of them obtained by the same circle inversion from a square grid filled with circles. 
The center $A$ of the reference circle is a~{\it carrier} of the fabric.

Several properties of the fabric copy those of the~grid filled with circles, which is a~direct consequence of the construction by inversion, between others:
\begin{itemize}
	\item[(i)] Each circle inscribed in a frame is shared by two orthogonal chains.
	\item[(ii)] Each region bounded by two frame circles contains a~chain. 
	\item[(iii)] The touch points of the circles within a chain lie on a~circle passing through $A.$ 
	\item[(iv)] Every fabric is invariant under reflexions in $4,2,1$ or $0$ axes that pass through the carrier $A,$ and under $4,2,1$ or $1$ rotations about $A,$ respectively. This number is determined only by the position in the grid of $A,$ which is also the center of the reference circle.
\end{itemize}
The set of reflexions and rotations corresponding to the fabric, endowed with the group operation that is composition of transformations, is symmetry group of the fabric. The group is:
\begin{itemize}
	\item[--] $D_4$, if $A$ is a vertex or the center of a~grid cell (Figure \ref{fig:8symC}),
	\item[--] $D_2$, if $A$ is the midpoint of a~side of a~grid cell  (will be shown in Figure \ref{fig:2sym}),
	\item[--] $D_1$, if $A$ lies on a side or a diagonal of a~grid cell, except the above (Figure \ref{fig:1symC}),
	\item[--] $C_1$, in all other cases (Figure \ref{fig:0symC}).
\end{itemize}
Figure \ref{fig:symmetries} shows three fabrics with different symmetry groups.
\begin{figure}[bht]
	\centering
	\subfigure[]{%
		\includegraphics[width=1.2in]{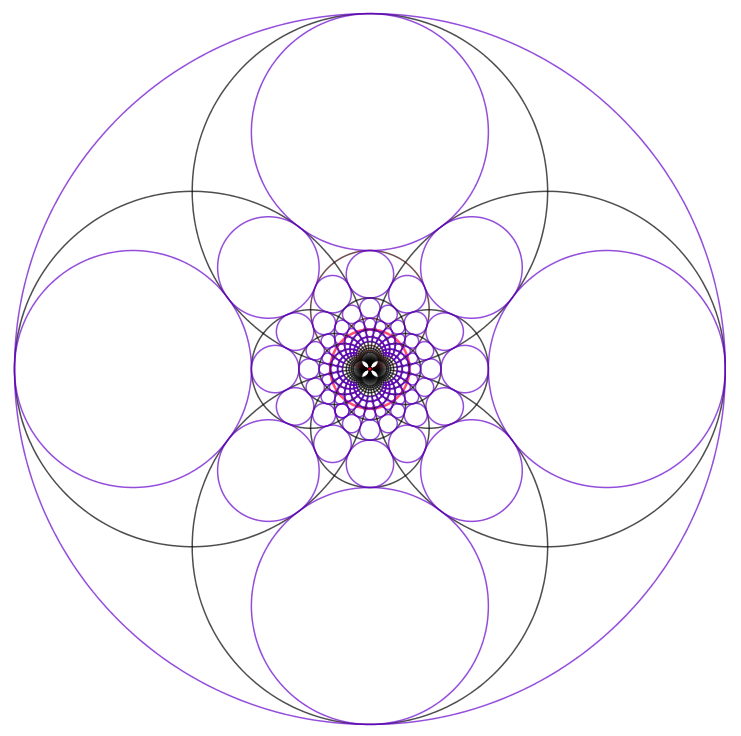}\hspace{2em}
		\label{fig:8symC}}
	\subfigure[]{%
		\includegraphics[width=1.2in]{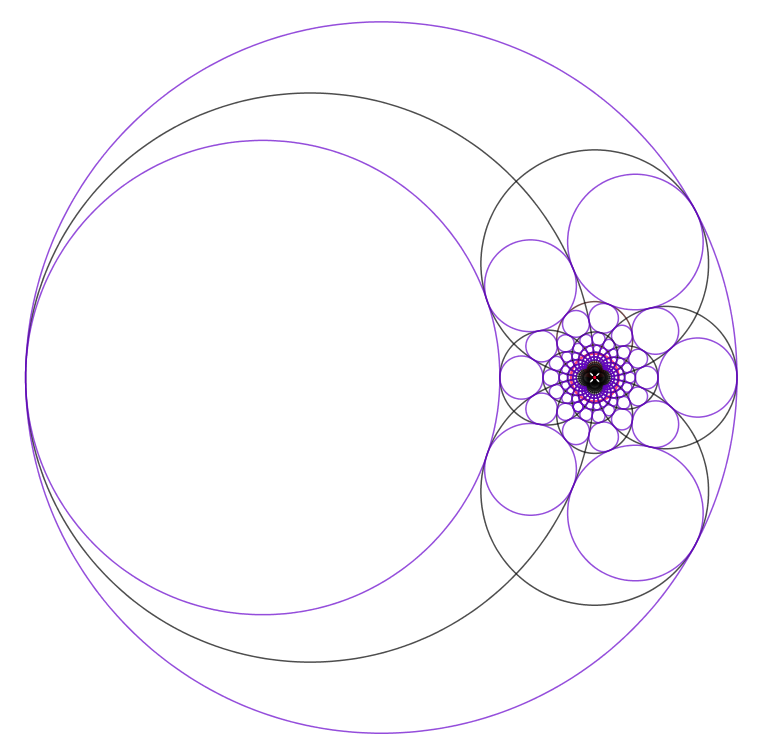}\hspace{2em}
		\label{fig:1symC}}
	\subfigure[]{%
		\includegraphics[width=1.2in]{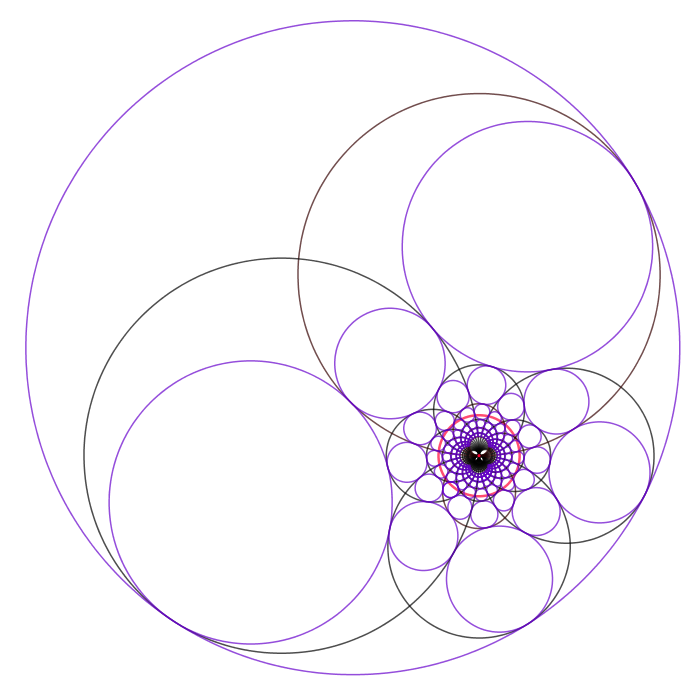}
		\label{fig:0symC}}
	\caption{From the left: fabrics with symmetry group $D_4, D_1,C_1$}
	\label{fig:symmetries}
\end{figure}
For the convenience of the reader, the frames are black, the chains are purple, and the reference circle is red.
Notice that the~frame circles pass through the carrier, whereas the chain circles do not.
\section{Curvatures in the Fabric}
\label{sec:curvatures}
Unlike the~grid filled with circles, the fabric exhibits visually endless variability of shapes, many of which are very pleasing to the eye. 
This visual variability of the fabric is numerically manifested  by the curvature of its components. 
Theorems \eqref{th:F-cur} and \eqref{th:QPC}, which are the main results in this paper, reveal the basic relations between the curvatures in the frame and in the chain, respectively. These relations apply to every fabric, regardless of its appearance or symmetry.  
\subsection{Curvatures in the frame}
\begin{thm}
	\label{th:F-cur}
	In a~plane, let $\mathcal{C}$ be a~circle of radius $r$ centered at $A.$ Let $s, t$ be two adjacent parallel lines in a~square grid, let $s', t'$ be the inverses of $s$ and $t$ with respect to $\mathcal{C}.$ Then the curvatures of $s',t'$ differ, in absolute value, by a~constant independent of the choice of $s$ and $t.$
\end{thm}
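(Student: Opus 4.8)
The plan is to reduce the statement to a single explicit inversion. First I would normalize the picture: only the one pencil of parallel grid lines through $s$ and $t$ matters, so put the carrier $A$ at the origin (whence $\mathcal C$ is $x^2+y^2=r^2$) and rotate about $A$ --- an isometry commuting with the inversion, hence curvature--preserving --- so that the grid lines of this pencil are the vertical lines $x=c_k:=c_0+kd$, $k\in\mathbb Z$, with $d$ the grid spacing. Everything then comes down to computing the curvature of the inverse of one vertical line.

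That computation is routine. Inversion in $\mathcal C$ sends $P$ to $(r^2/|P|^2)P$, so the line $x=c$ (with $c\neq0$) maps to the locus of $(u,v)=\frac{r^2}{c^2+y^2}(c,y)$ as $y$ ranges over $\mathbb R$; eliminating $y$ yields $u^2+v^2=\frac{r^2}{c}\,u$, i.e. the circle
\[
\left(u-\frac{r^2}{2c}\right)^2+v^2=\left(\frac{r^2}{2c}\right)^2 ,
\]
which passes through $A$, has radius $r^2/(2|c|)$, and hence curvature $2|c|/r^2$; if $c=0$ the line is its own inverse, of curvature $0$, which is the same formula in the limit. I would record this curvature with the sign of $c$, that is, with a sign saying on which side of the carrier the preimage line lay (equivalently, orienting all circles of the resulting parabolic pencil consistently), so that the inverse of $x=c$ has signed curvature $\kappa(c)=2c/r^2$.

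The conclusion is then immediate: adjacency of $s,t$ in the grid means their abscissas are $c$ and $c\pm d$, so
\[
\bigl|\kappa(s')-\kappa(t')\bigr|=\frac{2}{r^2}\,\bigl|c-(c\pm d)\bigr|=\frac{2d}{r^2},
\]
independent of $c$ and hence of the choice of $(s,t)$; this is the asserted constant. I expect the only point needing care to be conceptual rather than computational: with unsigned curvatures the value $2d/r^2$ already falls out whenever $A$ is not strictly between $s$ and $t$, and the sign convention above is exactly what covers the single remaining pair, namely the two lines bounding the grid cell that contains $A$. That same convention makes the curvatures $\kappa(c_k)=2(c_0+kd)/r^2$ of an entire frame into an arithmetic bi--sequence of common difference $2d/r^2$, so the displayed computation in fact establishes that stronger assertion as well.
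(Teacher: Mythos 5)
Your proof is correct and follows essentially the same route as the paper: both arguments reduce to the fact that the inverse of a grid line at distance $|c|$ from the carrier is a circle through $A$ of curvature $2|c|/r^2$, so that two adjacent lines yield curvatures differing by $2d/r^2$ independently of position. Your signed formula $\kappa(c)=2c/r^2$ merely packages in one stroke the three cases the paper treats separately (generic position, $A$ strictly between $s$ and $t$, and a line through $A$), and your sign convention agrees with the one adopted in Remark~\ref{rem1}.
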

\begin{figure}[bht]
	\centering
	\includegraphics[width=2.1in]{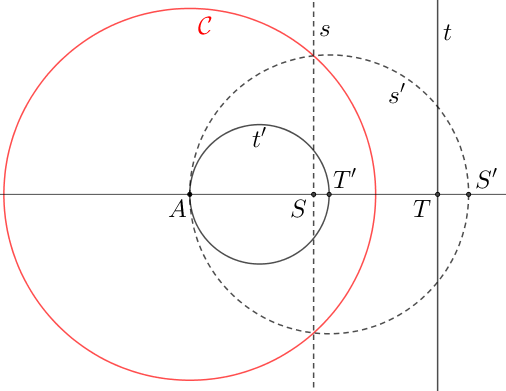}
	\caption{Reference circle (red), adjacent grid lines and their inverses}{}
	\label{fig:delta}
\end{figure} 
\begin{proof}
	Trivially, $s',t'$ are generalized circles tangent at $A.$ The line passing through $A$ and perpendicular to $s$ and $t$ meets $s,t,s',t'$ at $S,T,S',T',$ respectively, all of which are not necessarily different. Denote with $\overline{XY}$ the distance between points $X,Y.$ By definition of circle inversion
	$$\overline{AS}\cdot \overline{AS'}= \overline{AT}\cdot \overline{AT'}=r^2.$$
	\begin{enumerate}
		\item Assume that  the~situation is as in Figure \ref{fig:delta}.	If $\kappa_{s'}, \kappa_{t'}$ are the curvatures of $s', t',$ respectively, then
		\begin{equation*}
		\label{Delta}
		|\kappa_{t'}-\kappa_{s'}|=\bigg|\frac{2}{\overline{AT'}}-\frac{2}{\overline{AS'}}\bigg|=\bigg|\frac{2\overline{AT}-2\overline{AS}}{r^2}\bigg|=\frac{2d}{r^2},
		\end{equation*} 
		which is given only by the spacing $d=\overline{ST}$ between grid lines and the radius $r$ of the reference circle $\mathcal{C}.$ 
		\item If $A$ lies on the open segment $ST,$ it lies also on $S'T'.$ The circles $s'$ and $t'$ belong to opposite sides of the frame with respect to the carrier $A.$ The signs of the curvatures $\kappa_{s'}$ and $\kappa_{t'}$ are opposite, cf. Remark \ref{rem1}. Again, $|\kappa_{t'}-\kappa_{s'}|={2d}/{r^2},$ as
		$$|\kappa_{t'}-\kappa_{s'}|=\bigg|\frac{2\overline{AT}+2\overline{AS}}{r^2}\bigg|=\frac{2d}{r^2}.$$
		\item If one of the lines $s, t$ passes through $A,$ then still 
		$|\kappa_{t'}-\kappa_{s'}|={2d}/{r^2}$ because either $\overline{AS}=0, s'=s$ and $\kappa_{s'}=0,$ or $\overline{AT}=0, t'=t$ and $\kappa_{t'}=0.$
	\end{enumerate}	
\end{proof}
	According to Theorem \ref{th:F-cur}, the curvatures of frame circles are arranged in an~arithmetic bi--sequence. In addition, Theorem \ref{th:F-cur} implicitly states that the common difference is the same in both frames, and is independent of the position of $A$ in the grid. We will label the common difference 
$$	\Delta=|\kappa_{t'}-\kappa_{s'}|=\frac{2d}{r^2}.$$
\begin{rem}
	\label{rem1}
	The curvatures of circles located on opposite sides of the parabolic pencil with respect to the carrier have opposite signs. This complies the phenomenon known in the optics: the sign of the curvature depends on the position of the vertex (in our case, the point $A$) relative to the center (it is the center of the considered circle). Without loss of generality, we will agree that these signs are the same as in the coordinate system, in which $A$ is the origin and central lines $p$ and $p'$ of the frames are coordinate axes.
\end{rem}	
\subsection{Curvatures in the chain}
To prove Theorem \ref{th:QPC} we will use relation \eqref{id:desc} that connects radii of four circles touching  each other externally as in Figure \ref{fig:DCT0}, and which is known from a~1643 letter written by {Ren\'e} Descartes \cite{RD}. 
\begin{thm}[Descartes Circle Theorem]
	\label{th:desc}
	If four circles with curvatures $k_1,k_2,k_3,k_4$ are tangent to each other at six distinct points, then 
	\begin{equation}\label{id:desc}
	\bigg(\sum_{i=1}^{4}k_i\bigg)^2=2\sum_{i=1}^{4}k_i^2.
	\end{equation}
\end{thm}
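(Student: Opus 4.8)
The plan is to prove \eqref{id:desc} by reducing, through a single inversion, to a normal form in which the identity is transparent, and then transporting it back to the original four circles.

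\textbf{Normal form.} Let $P$ be the point at which two of the four circles, say $C_1$ and $C_2$, are tangent, and apply an inversion $\iota$ centred at $P$ with some radius $R$. Since $C_1$ and $C_2$ pass through $P$ and meet only there, their images $\ell_1=\iota(C_1)$ and $\ell_2=\iota(C_2)$ are two parallel lines. The circles $C_3,C_4$ do not pass through $P$, so $\iota(C_3),\iota(C_4)$ are genuine circles, each tangent to both $\ell_1,\ell_2$ and to one another. A circle tangent to two parallel lines has diameter equal to their distance; hence $\iota(C_3)$ and $\iota(C_4)$ are congruent of some radius $\rho$, with $\ell_1,\ell_2$ at distance $2\rho$. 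In this configuration the four curvatures are $0,0,1/\rho,1/\rho$, and $\bigl(0+0+1/\rho+1/\rho\bigr)^2=4/\rho^2=2\bigl(0+0+1/\rho^2+1/\rho^2\bigr)$, so \eqref{id:desc} holds for the normal form.

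\textbf{Transport under $\iota$.} It remains to recover \eqref{id:desc} for the original circles. For this I use the elementary transformation rules: $\iota$ sends a circle of radius $r$ not through $P$ to a circle of radius $R^2 r/|p|$, where $p$ is the power of $P$ with respect to that circle, and sends a line at distance $\delta$ from $P$ to a circle through $P$ of radius $R^2/(2\delta)$. Expressing $k_1,\dots,k_4$ through $\rho$, the two line–distances, and the two powers attached to $\iota(C_3),\iota(C_4)$, and substituting into the verified normal-form identity, the relations forced by the tangencies in the normal form (that $\iota(C_3),\iota(C_4)$ are squeezed between $\ell_1,\ell_2$ and touch each other) close the computation. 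Throughout, signs of curvatures are taken as in Remark \ref{rem1}, i.e.\ with the usual Descartes convention that an internally tangent enclosing circle is counted negatively; the hypothesis "tangent at six distinct points" is exactly what guarantees the configuration is, up to inversion, the normal form above (and not a degenerate one).

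\textbf{Main obstacle, and an alternative.} The conceptual content lies entirely in the normal form; the work is in checking that the quadratic combination $\bigl(\sum k_i\bigr)^2-2\sum k_i^2$ is carried by $\iota$ to a nonzero multiple of itself, which forces one to keep precise track of how radii and, crucially, signs of curvatures transform. A fully self-contained route avoids inversion altogether: place $C_1,C_2,C_3$ with centres $O_1,O_2,O_3$ so that $\overline{O_iO_j}=r_i+r_j$, let $O_4$ be the centre of a fourth tangent circle, apply the law of cosines to the three triangles $O_iO_4O_j$ to express the cosines of the angles at $O_4$ in terms of the $r_i$, and feed them into the identity $\cos^2\alpha+\cos^2\beta+\cos^2\gamma\pm2\cos\alpha\cos\beta\cos\gamma=1$ valid when $\alpha+\beta+\gamma\in\{\pi,2\pi\}$ (with sign $+$ for $\pi$ and $-$ for $2\pi$, according to whether $O_4$ is outside or inside triangle $O_1O_2O_3$); expanding and clearing denominators yields \eqref{id:desc} directly. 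In that version the only obstacle is the bookkeeping of a longish but routine algebraic simplification.
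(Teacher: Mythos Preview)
The paper does not prove Theorem~\ref{th:desc}; it is quoted as a classical result, attributed to Descartes' 1643 letter, and then used as input to Theorem~\ref{th:QPC}. So there is no in-paper argument to compare your proposal against.

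Assessed on its own, your inversion strategy is a standard and workable one: the reduction to two parallel lines and two congruent circles is correct, and \eqref{id:desc} is immediate in that normal form. The gap is in ``Transport under~$\iota$''. You claim that $Q(k_1,\dots,k_4)=\bigl(\sum k_i\bigr)^2-2\sum k_i^2$ is carried by $\iota$ to a nonzero multiple of itself, but that is not what happens: the curvature of each circle transforms by a factor depending on the power of $P$ with respect to \emph{that} circle (and for the two lines, on their individual distances to $P$), and these four factors are all different, so there is no common scalar and $Q$ is not sent to $cQ$. What actually closes the argument is a direct substitution. With $P$ at the origin, $R=1$, lines $y=a$ and $y=b$ with $b-a=2\rho$, and the two normal-form circles centred at $(u,\tfrac{a+b}{2})$ and $(u+2\rho,\tfrac{a+b}{2})$, the inverted (signed) curvatures are
\[
k_1=-2a,\quad k_2=2b,\quad k_3=\frac{u^2+ab}{\rho},\quad k_4=\frac{(u+2\rho)^2+ab}{\rho},
\]
with the minus sign on $k_1$ coming from the fact that the image of $y=a$ encloses the other three when $P$ lies outside the strip; plugging in and simplifying, $Q=0$ reduces to $(b-a)^2=4\rho^2$, which is exactly the normal-form constraint. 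This computation, together with the case analysis for the sign (according to whether $P$ lies outside the strip or between the lines), is precisely the ``precise track of signs'' you allude to but do not carry out. Your law-of-cosines alternative is a complete classical route and avoids this issue entirely; if you prefer it, write it out rather than leaving both approaches as sketches.
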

According to Apollonius, if the curvatures $k_1, k_2, k_3$ of three mutually tangent generalized circles with three points of tangency are known, then the quadratic equation \eqref{id:desc} has two real roots, say $k, k'$ (double roots are counted twice), which are the curvatures of two generalized circles touching the three. From \eqref{id:desc} we have
\begin{equation}
\label{id:desc1}
k+k'=2(k_1+k_2+k_3).
\end{equation}
Relation \eqref{id:desc} also applies to all configurations of four two--by--two tangent generalized circles (called {\it Descartes configurations} in \cite{lagarias_beyond}), in which no three share a~point of tangency, see Figure \ref{fig:descartes_config}.
Curvatures meet the rules:
\begin{itemize}
	\item[(i)] \label{config1} If three circles are internally tangent  to the fourth as in Figure \ref{fig:DCT1}, the curvature of the outermost circle enters \eqref{id:desc} with a  sign opposite to the three, which is poetically explained in \cite{soddy1936}. 
	\item[(ii)] The curvature of a~straight line is zero.
\end{itemize}
\begin{figure}[bht]
	\centering
	\subfigure[]{%
		\includegraphics[width=1.1in]{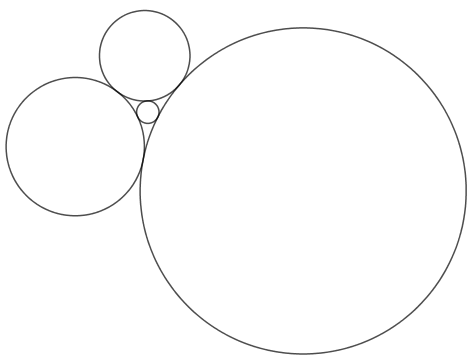}\hspace{1.5em}
		\label{fig:DCT0}}
	\subfigure[]{%
		\includegraphics[width=0.8in]{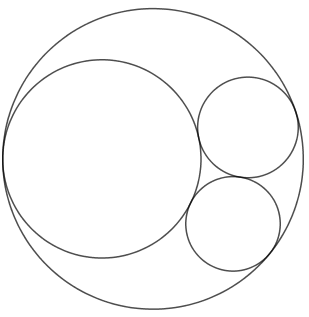}\hspace{1.5em}
		\label{fig:DCT1}}
	\subfigure[]{%
		\includegraphics[width=1.0in]{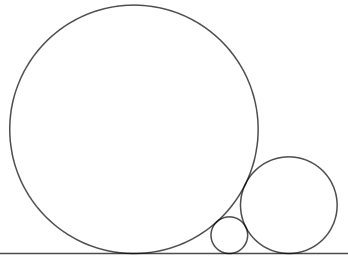}\hspace{2em}
		\label{fig:DCT4}}
	\subfigure[]{%
		\includegraphics[width=0.42in]{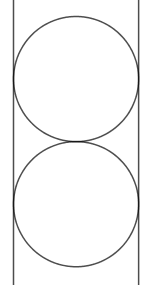}
		\label{fig:DCT3}}
	\caption{Descartes configurations}
	\label{fig:descartes_config}
\end{figure}

Let us now look at the curvatures in a~chain.
\begin{thm}
	\label{th:QPC}
	Let $\gamma_0, \gamma_1$ be two touching circles inscribed in a  region bounded by two adjacent frame circles $s',t'$ in a~fabric, let  $\left\{\gamma_{n}\right\}_{n=-\infty}^{\infty}$ be a chain generated by the four circles. Let us label with $\kappa_0, \kappa_1, \kappa_{s'}, \kappa_{t'}$ the respective curvatures of $\gamma_0, \gamma_1, s', t'.$ Then the curvature $\kappa_n$ of $\gamma_n$ satisfies 
	\begin{equation}
	\label{quadr}
	\kappa_n=\kappa_0+D\cdot n+\Delta \cdot n(n-1), n\in \mathbb Z,
	\end{equation}
	where $D=\kappa_1-\kappa_0$ and $\Delta=|\kappa_{t'}-\kappa_{s'}|.$	
\end{thm}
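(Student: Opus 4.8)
The plan is to set up a recurrence for the curvatures $\kappa_n$ using the Descartes Circle Theorem applied to successive quadruples of circles in the chain, and then solve that recurrence explicitly. The key observation is that for each $n$, the four circles $s'$, $t'$, $\gamma_{n}$, $\gamma_{n+1}$ form a Descartes configuration (the two neighbours in the chain are mutually tangent, and each is tangent to both bounding frame circles), and likewise $s'$, $t'$, $\gamma_{n-1}$, $\gamma_{n}$ form one. Since $\gamma_{n-1}$ and $\gamma_{n+1}$ are the two solutions of the Apollonius problem for the triple $\{s', t', \gamma_n\}$, relation \eqref{id:desc1} gives
\begin{equation*}
\kappa_{n-1}+\kappa_{n+1}=2(\kappa_{s'}+\kappa_{t'}+\kappa_n),
\end{equation*}
that is, $\kappa_{n+1}-2\kappa_n+\kappa_{n-1}=2(\kappa_{s'}+\kappa_{t'})$. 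The right-hand side is a constant, call it $c$; this is a linear second-order recurrence with constant inhomogeneity, so its general solution is a quadratic polynomial in $n$, namely $\kappa_n=an^2+bn+\kappa_0$ with $2a=c$, i.e. $a=\kappa_{s'}+\kappa_{t'}$, and $b$ fixed by $\kappa_1-\kappa_0=a+b$, giving $b=D-a$. Writing $D\cdot n+a\,n(n-1)=D\cdot n+(a)(n^2-n)$ and comparing with $an^2+bn$ shows these agree precisely when $b=D-a$, so \eqref{quadr} will follow once we identify $a=\Delta$.

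The remaining point — and the one I expect to be the main obstacle — is to show that the leading coefficient equals $\Delta$, i.e. that $\kappa_{s'}+\kappa_{t'}=\Delta=|\kappa_{t'}-\kappa_{s'}|$. This is a relation about the two bounding frame circles themselves, not about the chain, so it must come from the geometry of the fabric rather than from the Descartes recurrence. I would argue as follows: $s'$ and $t'$ are tangent at the carrier $A$ and lie in the same parabolic pencil, hence on the same side of $A$ (they bound a region containing a chain, so one is ``inside'' the other from the viewpoint of $A$). With the sign convention fixed in Remark \ref{rem1}, curvatures of circles on the same side of the carrier have the same sign, so $|\kappa_{t'}-\kappa_{s'}|$ with both of $\kappa_{s'},\kappa_{t'}$ of one sign is the difference of their absolute values — but this is not yet $\kappa_{s'}+\kappa_{t'}$. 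The resolution is that in the Descartes configuration $\{s',t',\gamma_n,\gamma_{n+1}\}$ the outer bounding circle contains the other three internally, so by rule (i) its curvature enters \eqref{id:desc} with the opposite sign; thus in the recurrence the quantity $\kappa_{s'}+\kappa_{t'}$ is to be read as $|\kappa_{s'}|-|\kappa_{t'}|$ if $t'$ is the enclosing circle, which equals $\pm(\kappa_{t'}-\kappa_{s'})$ in the chosen convention, hence has absolute value $\Delta$. One still needs to check the sign comes out positive so that curvatures grow away from the ``thin'' end of the chain; this can be pinned down by evaluating at a convenient degenerate instance — e.g. a chain arising from inverting a strip through $A$, where one frame circle is the line $p$ with curvature $0$ and the formula reduces to the Pappus-chain case, for which the quadratic growth of curvatures is classical and matches $\Delta=|\kappa_{t'}|$.

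Finally I would assemble the pieces: establish the Descartes recurrence, note that $\kappa_0,\kappa_1$ are free initial data, solve the recurrence to get $\kappa_n=\kappa_0+(D-a)n+a\,n^2=\kappa_0+Dn+a\,n(n-1)$, and substitute $a=\kappa_{s'}+\kappa_{t'}=\Delta$. A short verification that the boundary cases in the definition of $\Delta$ (when one frame circle degenerates to a line) are consistent — already handled by Theorem \ref{th:F-cur} — closes the argument. The only genuinely delicate bookkeeping is the sign/absolute-value matching between the Descartes sum $\kappa_{s'}+\kappa_{t'}$ and $\Delta=|\kappa_{t'}-\kappa_{s'}|$, which hinges on the internal-tangency sign rule (i) together with the pencil-side sign convention of Remark \ref{rem1}.
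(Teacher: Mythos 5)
Your proof follows essentially the same route as the paper's: apply the Descartes relation \eqref{id:desc1} to the triple $\{s',t',\gamma_n\}$ to obtain the second--order recurrence $\kappa_{n+1}-2\kappa_n+\kappa_{n-1}=2\Delta$ and solve it as a quadratic bi--sequence with initial data $\kappa_0,\kappa_1$. The sign bookkeeping you single out as the main obstacle --- identifying the Descartes--signed sum of the two frame curvatures with $\Delta=|\kappa_{t'}-\kappa_{s'}|$ via the internal--tangency rule (i) --- is precisely the point the paper disposes of in a single clause, so your resolution is the intended one.
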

\begin{proof}
	Each circle $\gamma_n$ is tangent to $s'$ and $t',$ which are internally tangent at $A,$ therefore the Descartes theorem \ref{th:desc} can be applied. Due to (\ref{id:desc1}) where one of $\kappa_{s'}, \kappa_{t'}$ is zero or enters with a~negative sign, the curvatures $\kappa_{n+1}, \kappa_{n-1}$ of two chain circles  adjacent to $\gamma_n$ satisfy
	$$\kappa_{n+1}+\kappa_{n-1}=2(\kappa_n+\Delta),$$
	or equivalently 
	$$\kappa_{n+1}-\kappa_n=\kappa_n-\kappa_{n-1}+2\Delta.$$
	Therefore, $\{\kappa_n\}_{n=-\infty}^{\infty}$ is a~quadratic bi--sequence with a~recurrence relation 
	$$\kappa_{n+1}=\kappa_n+D+2n\Delta, n\in \mathbb{Z}, $$
	and the closed--form expression \eqref{quadr} of the general term. 
\end{proof}
\begin{rem}
	The curvature of each circle $\gamma_n,$ as obtained by  \eqref{quadr}, is independent of the choice of $\gamma_0, \gamma_1$ because the quadratic bi--sequence of curvatures is doubly infinite.
\end{rem}
\subsection{Integral fabric}
We will say that the~fabric is {\it integral}, if the curvature of each single circle -- either in the~frame or in the fabric~chain -- is an integer. In Theorem \ref{integral} we state a~sufficient condition for the~fabric to be integral.
\begin{thm}
	\label{integral}
	In a~fabric $\mathcal{F},$ let $\gamma_0, \gamma_1$ be a~pair of touching circles inscribed in the region bounded by frame circles $s',t'.$ Label with $\Psi$ the chain generated by $s',t',\gamma_0, \gamma_1.$ Let $\alpha$ and $\beta$ be arbitrary neighbors of $\gamma_0$ and $\gamma_1,$ respectively, in the chains  orthogonal to $\Psi.$ Denote the two chains $\Psi'_{0}, \Psi'_{1}.$
	If the curvatures of $s', t', \gamma_0, \gamma_1, \alpha$ and $\beta$ are integers, then the fabric $\mathcal{F}$ is integral.
\end{thm}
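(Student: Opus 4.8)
The plan is to start from the six circles of integer curvature and let Theorems~\ref{th:F-cur} and~\ref{th:QPC} carry integrality to the rest of the fabric in successive waves.

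\emph{Setting up.} First, $\Delta=|\kappa_{t'}-\kappa_{s'}|$ is an integer, being a difference of integers, and by Theorem~\ref{th:F-cur} this same $\Delta$ is the common difference of the curvature bi--sequences of \emph{both} frames. Say $s',t'$ belong to the frame $\mathcal{V}$. Since they are two consecutive members of the arithmetic bi--sequence of curvatures of $\mathcal{V}$ and both are integers, every circle of $\mathcal{V}$ has integer curvature. So one frame is already handled.

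\emph{All chains.} Apply Theorem~\ref{th:QPC} to $\Psi$: with $\kappa_0=\kappa_{\gamma_0}$, $\kappa_1=\kappa_{\gamma_1}$ and $\Delta$ all integers, the closed form $\kappa_n=\kappa_0+(\kappa_1-\kappa_0)\,n+\Delta\,n(n-1)$ shows $\Psi$ is integral. Next apply Theorem~\ref{th:QPC} to $\Psi'_0$ and to $\Psi'_1$: each is a chain bounded by two adjacent frame circles, whose curvatures again differ by $\Delta$ (Theorem~\ref{th:F-cur}), and each contains a pair of touching circles of integer curvature, namely $\gamma_0,\alpha$ respectively $\gamma_1,\beta$; hence $\Psi'_0$ and $\Psi'_1$ are integral. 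Now cascade. Since $\Psi'_0$ and $\Psi'_1$ are two \emph{adjacent} chains orthogonal to $\Psi$, every chain in the family of $\Psi$ meets them in two consecutive circles of integer curvature, and so is integral by Theorem~\ref{th:QPC}. Then every chain orthogonal to $\Psi$ is integral as well, because any two of its consecutive circles lie in two adjacent chains of $\Psi$'s family, which are now known integral. Thus every chain circle of $\mathcal{F}$ has integer curvature.

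\emph{The second frame.} It remains to make the frame $\mathcal{H}$ (orthogonal to $\mathcal{V}$) integral; by Theorem~\ref{th:F-cur} it suffices to exhibit a single circle of $\mathcal{H}$ with integer curvature. The tool is that $\Psi'_0$ together with its two bounding $\mathcal{H}$--circles $u',w'$ and any two of its consecutive members is a Descartes configuration, so Theorem~\ref{th:desc} (with the appropriate sign for the outermost circle, exactly as in the proof of Theorem~\ref{th:QPC}) together with the adjacency relation $|\kappa_{u'}-\kappa_{w'}|=\Delta$ yields a quadratic equation for $\kappa_{w'}$ with integer coefficients. The plan is to combine this with the analogous Descartes configuration attached to $\Psi'_1$, whose bounding $\mathcal{H}$--circles are $w'$ and one further $\mathcal{H}$--circle $z'$; the two equations share the unknown $\kappa_{w'}$, and eliminating shows that $2\kappa_{w'}+\Delta$ is rational, hence --- its square being an integer --- an integer, after which Theorem~\ref{th:F-cur} propagates integrality to all of $\mathcal{H}$.

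I expect this last step to be the main obstacle. It is the only place where the hypothesis that \emph{both} $\alpha$ and $\beta$ have integer curvature is genuinely used, and the delicate part is the bookkeeping inside the two Descartes relations: one must track which of $u',w'$ (resp.\ $w',z'$) is the enclosing circle, how its curvature enters~\eqref{id:desc}, and a parity check ensuring that $2\kappa_{w'}+\Delta\equiv\Delta\pmod 2$ so that $\kappa_{w'}$ is an honest integer rather than merely a half--integer. That parity verification is the crux of the argument.
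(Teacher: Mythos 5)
Your first two stages are correct and are essentially the paper's own proof: $\Delta=|\kappa_{t'}-\kappa_{s'}|$ is an integer, Theorem~\ref{th:F-cur} makes the frame containing $s',t'$ integral, and Theorem~\ref{th:QPC} applied first to $\Psi$, then to $\Psi'_0$ (seeded by $\gamma_0,\alpha$) and $\Psi'_1$ (seeded by $\gamma_1,\beta$), then cascaded through the two orthogonal families, makes every chain circle integral; your bookkeeping of which pair of consecutive integer-curvature circles seeds each application of \eqref{quadr} is in fact more explicit than the paper's version. The difference is your third stage: the paper disposes of \emph{both} frames in its opening sentence, claiming that integrality of $s',t'$ gives, via Theorem~\ref{th:F-cur}, integrality of ``each circle in the vertical or horizontal frame.'' You correctly observed that Theorem~\ref{th:F-cur} only fixes the common difference of the second frame, not its offset, so something more is needed there --- and that is where your proposal stops being a proof: the Descartes--elimination--plus--parity argument is only announced, with the parity step explicitly left open.

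That step is not a loose end you could tighten; it cannot be completed, because the six hypotheses do not imply integrality of the orthogonal frame. Take the fabric of Example~\ref{ex2} (Figure~\ref{fig:2sym}) and enlarge the reference circle by the factor $\sqrt{2}$: every curvature in the fabric is halved, so one frame has curvatures $0,\pm1,\pm2,\dots$, every chain circle still has integer curvature (all chain curvatures in Example~\ref{ex2} are even, of the form $2(m^2+n^2+n)$), but the other frame now has curvatures $\pm\tfrac12,\pm\tfrac32,\dots$. Choosing $s',t'$ in the integral frame with curvatures $1,2$, the chain $\Psi$ between them (curvatures $m^2+2$), two touching members $\gamma_0,\gamma_1$ with curvatures $2,3$, and any neighbors $\alpha,\beta$ in the orthogonal chains (curvatures such as $6$ and $7$), all six hypothesized curvatures are integers, yet $\mathcal{F}$ is not integral. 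In this configuration your quantity $2\kappa_{w'}+\Delta$ is indeed an integer ($\Delta=1$, $\kappa_{w'}=\tfrac12$, so it equals $2$), but the congruence $2\kappa_{w'}+\Delta\equiv\Delta\pmod 2$ that you single out as the crux fails --- and must fail. So your proposal stalls exactly where the statement itself, and the paper's one-line treatment of the frames, breaks down; a correct version needs an additional hypothesis reaching the second frame, for instance that one bounding circle of $\Psi'_0$ (equivalently, two adjacent circles of the orthogonal frame) also has integer curvature, after which your Theorem~\ref{th:F-cur} propagation finishes the argument.
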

\begin{proof} 
	As the curvatures of two adjacent frame circles are integers, by Theorem \ref{th:F-cur} it is the same for each circle in the vertical or horizontal frame.
	
	According to Theorem \ref{th:QPC} all the circles belonging to $\Psi'_{0}$ or $\Psi'_{1}$ have integral curvatures. Consequently, each circle in an arbitrary chain orthogonal to $\Psi'_{0}$ or $\Psi'_{1}$ has an integral curvature. Because every chain circle belongs to a~chain orthogonal to the~given chain, the proof is complete.
\end{proof}
Theorem \ref{integral} states that six circles with integer curvature suitably distributed in the fabric are sufficient for the fabric to be integral. Recall that, unlike the fabric, the Apollonian packing is integral if its generating quad -- the configuration of four circles as in Figure \ref{fig:DCT1} -- is integral. 
\begin{ex}
	\label{ex2}
	The fabric with symmetry group $D_2$ pictured in Figure \ref{fig:2sym}, where chain circles are purple and frame circles black, is integral. We will derive all the curvatures in the fabric; several circles are already labeled with their curvature. Figure \ref{fig:invertedCircle} shows the corresponding grid filled with circles, in which each line and each circle is labeled with the curvature of its inverse shape in the fabric. 
	The reference circle and its center are red in both figures. 
\end{ex}	
\begin{figure}[bht]
	\centering
	\includegraphics[width=2.6in]{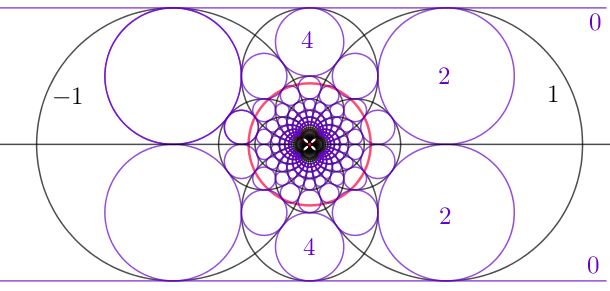}
	\caption{A fabric with symmetry group $D_2$}{}
	\label{fig:2sym}
\end{figure}
Assume that the distance between two extreme horizontal lines in Figure \ref{fig:2sym} is $2.$  We can easily determine the smallest curvatures.

The black central line passing through the carrier (curvature $0$) is the largest generalized circle in the horizontal frame. The two  frame circles adjacent to the line are congruent (radius 1/2) and tangent from outside. Therefore their curvatures $\pm 2$ have opposite signs. We find $\Delta = |2-0|=2.$ The arithmetic bi--sequence of curvatures in this frame is $\{\dots, -6,-4,-2,0,2,4,6,\dots\}.$ 	
These are the labels of horizontal lines in Figure \ref{fig:invertedCircle}.
\begin{figure}[bht]
	\centering
	\includegraphics[width=4in]{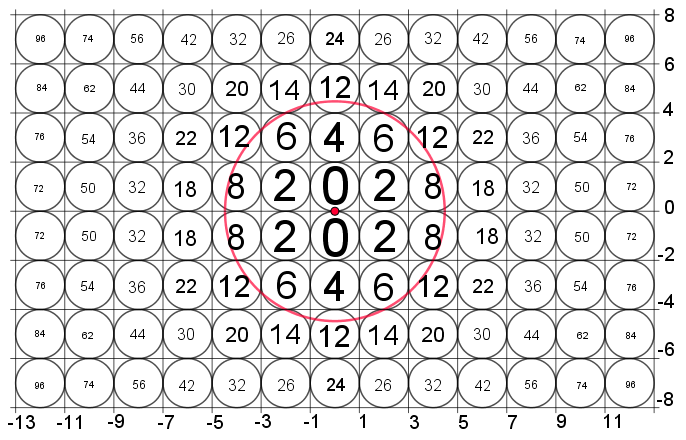}
	\caption{A grid labeled with curvatures relative to the red reference circle}{}
	\label{fig:invertedCircle}
\end{figure}

Now look at the vertical frame. The two largest black circles are congruent (radius $1$) and touch each other from the outside, so their curvatures $\pm 1$ are opposite. As they are adjacent frame circles, the value $\Delta=2$ is confirmed. We notice that the curvatures in the horizontal frame are all even integers, whereas in the vertical frame they are all odd integers. The latter are the labels of the vertical lines in Figure \ref{fig:invertedCircle}.

Most chains in the fabric are easy to identify. Consider the vertical chain inscribed in the rightmost large circle. Two congruent touching circles (curvature $2$) belonging to the chain give $D=|2-2|=0.$  With the use of \eqref{quadr} we find the bi--sequence of curvatures $\{\dots,26,14,6,2,2,6,14,26,\dots\};$ see Figure \ref{fig:invertedCircle}, the labels in a~column.

The strangest chain in Figure \ref{fig:2sym} is coming out of the carrier upwards, includes the upper horizontal line followed by the~lower one, and the chain continues in smaller and smaller circles upwards, back to the carrier. This is a~vertical chain inscribed in the region ``between" two large congruent circles, members of the vertical frame. Because the two lines are neighbors in the chain, $D=|0-0|=0.$ The bi--sequence of curvatures $\{\dots, 12,4,0,0,4,12,\dots\}$ appears in the~middle column in Figure \ref{fig:invertedCircle}.
\section{Two Sangaku Problems}
\label{sec:sangaku}
Solving sangaku problems is growing in popularity. In Japanese tradition, mathematical problems drawn on wooded tablets (sangaku) originally hung in shrines or temples. For a~rich and recognized source of information on Japanese {\it sacred mathematics} with many problems and solutions, see \cite{FukPed,FukRot}. 
Bellow we use Theorem~\ref{th:QPC} to solve two sangaku problems with chains of circles. Problem \ref{gumma} dated 1814 from the Gumma prefecture  is available in \cite{FukPed} as Problem 1.8.6.
\begin{prob}[Figure \ref{fig:prob1}] 
	\label{gumma}
	A chain of tangent circles is inscribed in a~region bounded by two internally tangent semi--circles and a common central line. If the radii of the circles are $r_1>r_2> \dots,$ prove that 
	\begin{equation}
	\label{eq:prob1}
	\frac{7}{r_4}=\frac{2}{r_7}+\frac{5}{r_1}.
	\end{equation}	
\end{prob} 
\noindent {\it Solution.}
The chain is part of a~PL--chain with two largest congruent circles symmetric about the horizontal axis, as shows Figure \ref{fig:prob1sol}. Equal curvatures of congruent neighbors give $D=0.$ 
We set $\kappa_i=1/r_i$ and rewrite \eqref{eq:prob1} as
$$7\kappa_4-(2\kappa_7+5\kappa_1)=0.$$
With the use of \eqref{quadr} we obtain
$$7\kappa_4-(2\kappa_7+5\kappa_1)=7(\kappa_0+4D+12\Delta)-2(\kappa_0+7D+42\Delta)-5(\kappa_0+D)=9D=0,$$ which proves \eqref{eq:prob1}.
\begin{figure}[bht]
	\centering
	\subfigure[]{%
		\includegraphics[width=1.9in]{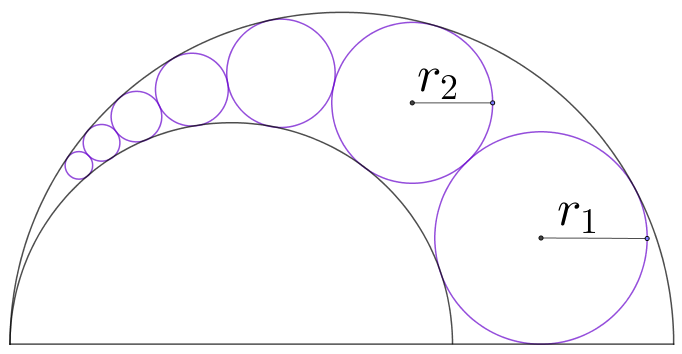}\hspace{5.5em}
		\label{fig:prob1}}
	\subfigure[]{%
		\includegraphics[width=0.95in]{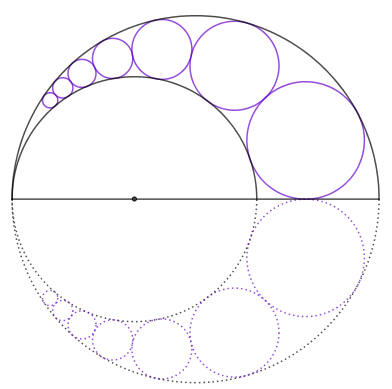}
		\label{fig:prob1sol}}
	\caption{A chain of seven circles}
	\label{fig:gumma}
\end{figure}
\begin{rem}
	We observe that:
	\begin{enumerate}
		\item The radii of the  semi--circles in Figure \ref{fig:prob1} do not affect the result because $\Delta$ and $\kappa_0$ drop out from $7\kappa_4-(2\kappa_7+5\kappa_1).$
		\item Identity \eqref{eq:prob1} applies exclusively to PL--chains that contain a~pair of touching congruent circles.
	\end{enumerate}
\end{rem} 
Unlike the Problem \ref{gumma}, a $3:2$ ratio of the radii of bounding circles is crucial in Problem \ref{menuma}. In \cite{FukRot}, p. 312, we read: {\it The problem is from a lost tablet hung by Kanei Teisuke in 1828 in the Menuma temple of Kumagaya city, Saitama prefecture. We know of it from the 1830 manuscript Saishi Shinzan or Collection of Sangaku by Nakamura Tokikazu (?–1880). (Japan Academy.)}
\begin{prob}[Figure \ref{fig:prob2}]
	\label{menuma}
	Show that $r_7=r/7.$
\end{prob}
\noindent {\it Solution.}
The configuration is symmetric about the~central line of bounding circles and of three congruent circles. The inscribed chain is also symmetric about this line, as shows Figure \ref{fig:prob2sol}. Curvatures of the outermost and the inner bounding (frame) circles are $a=1/(3r)$ and $b=1/(2r),$ hence $\Delta=|b-a|=1/(6r).$
As the largest chain circle with radius $r$ is considered as $N^o 1$ in the chain, we have $\kappa_1=1/r.$
Due to the symmetry, the neighbors of this circle are congruent: $\kappa_0=\kappa_2.$ The relation  \eqref{id:desc1} becomes 
$$
2\kappa_0=2(-a+b+\kappa_1),$$ which gives 
$D=\kappa_1-\kappa_0=-\Delta=-{1}/{(6r)}.$ From this we get $\kappa_0=\kappa_1-D=7/(6r).$
With the use of \eqref{quadr} we find 
$$\kappa_7=\kappa_0+7D+42\Delta=\frac{7}{6r}-\frac{7}{6r}+\frac{42}{6r}=\frac{7}{r},$$ or equivalently $r_7=r/7.$ 
\begin{figure}[bht]
	\centering
	\subfigure[]{%
		\includegraphics[width=1.5in]{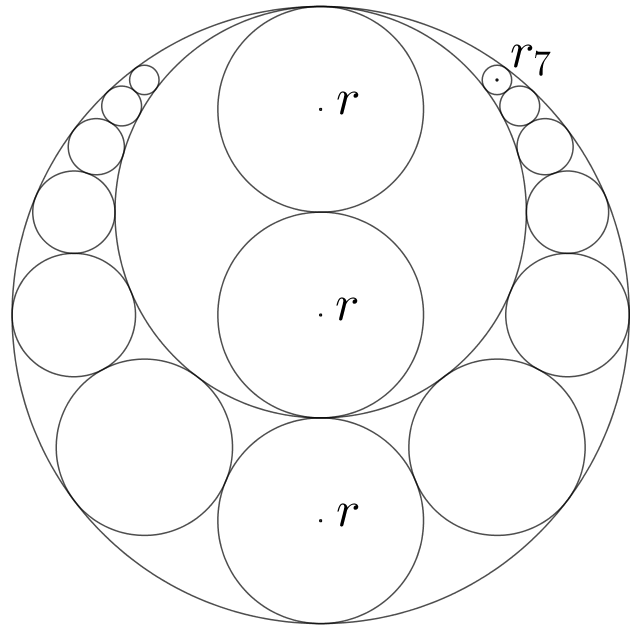}\hspace{5.5em}
		\label{fig:prob2}}
	\subfigure[]{%
		\includegraphics[width=1.55in]{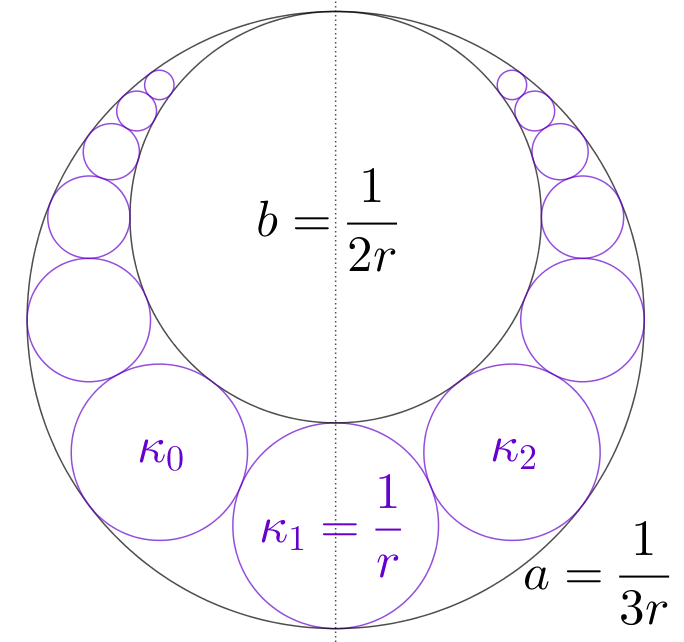}
		\label{fig:prob2sol}}
	\caption{A seventh circle problem}
	\label{fig:saitama}
\end{figure}

\end{document}